\newcommand{\R}{\mathbb R}
\newcommand{\D}{\mathcal D}
\newcommand{\Dev}{\mathrm D}
\newcommand{\E}{\mathbb E}
\newcommand{\inv}{^{-1}}
\newenvironment{proof}[1][Proof]{%
    \par\noindent\textit{#1. }
    \ignorespaces
}{%
    \hfill$\square$ 
    \par
}
\begin{document}
\title{Geometric Gaussian Approximations of Probability Distributions}
%
%
\author{Nathaël Da Costa\inst{1}
\and
Bálint Mucsányi\inst{1}
\and
Philipp Hennig\inst{1}}
\authorrunning{N. Da Costa et al.}
%
\institute{Tübingen AI Center, University of Tübingen, Germany
\email{nathael.da-costa@uni-tuebingen.de}}
\maketitle              
\begin{abstract}
Approximating complex probability distributions, such as Bayesian posterior distributions, is of central interest in many applications. We study the expressivity of geometric Gaussian approximations. These consist of approximations by Gaussian pushforwards through diffeomorphisms or Riemannian exponential maps. We first review these two different kinds of geometric Gaussian approximations. Then we explore their relationship to one another. We further provide a constructive proof that such geometric Gaussian approximations are universal, in that they can capture any probability distribution. Finally, we discuss whether, given a family of probability distributions, a common diffeomorphism can be found to obtain uniformly high-quality geometric Gaussian approximations for that family.

\keywords{Riemannian geometry \and Information geometry \and Rosenblatt transformation \and Laplace approximation.}
\end{abstract}
\section{Introduction}
Approximating probability distributions is a central task of interest in statistics and machine learning. For example, density estimation aims to learn an underlying probability distribution given data sampled from that distribution. In Bayesian statistics, posterior distributions---that is, probability distributions over the parameters conditioned on data---can often not be computed in closed form and must instead be approximated.

Such applications thus require approximation schemes for probability distributions. For example, in Bayesian statistics, one may use a Gaussian to approximate a distribution with the help of a Laplace approximation or variational inference. Such Gaussian approximations are clearly limited in their expressivity, thus requiring more expressive approximating families. Instead, one may approximate the distribution of interest through a non-linear pushforward of a Gaussian distribution. For example, deep neural networks such as normalising flows or diffusion models are used to learn an approximate mapping from a Gaussian distribution to the distribution of interest \cite{kobyzev_normalizing_2021,yang_diffusion_2023}.

In this paper, we study reparametrised and Riemannian Gaussian approximations. The former consists of approximations to probability distributions by diffeomorphism pushforwards of Gaussians \cite{mackay_choice_1998,hobbhahn_laplace_2022,bui_likelihood_2024}, while the latter consists of approximations by exponential map pushforwards of Gaussians in a tangent space under some Riemannian metric \cite{bergamin_riemannian_nodate,yu_riemannian_2024}.

Our first result (\cref{thm:reparamga-to-riemannga,thm:converse}) shows that there is a one-to-one correspondence between reparametrised Gaussian approximations and Riemannian Gaussian approximations under flat metrics.

Our second result (\cref{thm:universality}) then shows that, under mild regularity assumptions, even reparametrised Gaussian approximations can approximate any probability distribution. This makes both reparametrised and Riemannian Gaussian approximations \textit{universal}.

Finally, we discuss whether, given a family of probability distributions, there exists a diffeomorphism such that every member of the family may be well approximated by a Gaussian pushforward through this diffeomorphism. An in\-for\-mation-geometric argument shows that it is generally not possible to do this exactly. This leaves several open questions as to the form of the optimal such diffeomorphism.

\section{Geometric Gaussian Approximations}
\begin{definition}\label{def:ga}
    A \textnormal{Gaussian Approximation} is an approximation to a probability measure $\mathrm p$ on $\R^d$ by a Gaussian distribution $\mathcal N(\bm\mu,\bm\Sigma)$.
\end{definition}

For example, given some dataset $\D$ and a parametric statistical model, Bayes\-ian statistics is faced with the problem of computing the posterior distribution over the parameters
\begin{equation}\label{eq: posterior}
    p(\bm\theta\mid\D) = \frac{p(\D\mid\bm\theta)p(\bm\theta)}{p(\D)}
\end{equation}
where $p(\bm\theta)$ is a prior distribution over the parameters, $p(\D\mid \bm\theta)$ is a likelihood, and $p(\D) = \int_{\R^d}p(\D\mid\bm\theta)p(\bm\theta)\;d\bm\theta$ is the evidence. This last integral is usually intractable; thus, the posterior \cref{eq: posterior} is often approximated. The classical Laplace approximation to a posterior with probability density function $p(\bm\theta\mid\D)$ assumed to be twice continuously differentiable in $\bm\theta\in\R^d$ is a Gaussian approximation computed by taking a second-order Taylor expansion of the negative logarithm of the unnormalised posterior density
\begin{equation}
    -\log p(\D\mid\bm\theta) -\log p(\bm\theta) \approx \frac{1}{2}(\bm\theta-\bm\theta^*)^\top \bm H(\bm\theta-\bm\theta^*)+\text{const}
\end{equation}
where $\bm\theta^*$ is a maximum a posteriori estimator and $H_{ij} := \frac{\partial^2 (-\log p(\D\mid\bm\theta) -\log p(\bm\theta))}{\partial \theta_i\partial\theta_j}$. The posterior distribution is then approximated by $\mathcal N(\bm\theta^*, \bm H^{-1})$. Note that when the posterior is Gaussian, this approximation is exact.

Clearly, such Gaussian approximations are limited in their expressivity and fail to capture complex probability distributions faithfully. For this reason, more flexible classes of approximations have recently been proposed (\cref{example}).
\begin{definition}
    A \textnormal{Reparametrised Gaussian Approximation (ReparamGA)} is an approximation to a probability measure $\mathrm p$ on $\R^d$ by a pushforward $\bm\varphi_*\mathcal N(\bm\mu,\bm\Sigma)$ of a Gaussian distribution $\mathcal N(\bm\mu,\bm\Sigma)$, where $\bm\varphi\colon\R^d\to \R^d$ is some diffeomorphism.
\end{definition}
\begin{figure}
    \centering
    \includegraphics
    {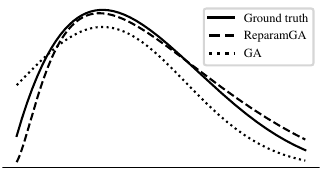}
    \caption{A Gaussian approximation to a distribution (specifically a Laplace approximation) and a reparametrised Gaussian approximation (specifically a reparametrised Laplace approximation).}
    \label{example}
\end{figure}
\begin{definition}
    A \textnormal{Riemannian Gaussian Approximation (RiemannGA)} is an approximation to a probability measure $\mathrm p$ on $\R^d$ by a pushforward $\bm\exp_{\bm\mu*}\mathcal N(\bm 0,\bm\Sigma)$ where $\mathcal N(\bm 0,\bm\Sigma)$ is a Gaussian distribution defined on a tangent space $T_{\bm\mu}\R^d$ at some $\bm\mu\in \R^d$, and $\bm\exp_{\bm\mu}\colon T_{\bm\mu}\R^d \to \R^d$ is the exponential map under some Riemannian metric $g$ on $\R^d$.
\end{definition}

\section{Relationship Between the Approximations}

We now show that a ReparamGA is a special case of a RiemannGA.

\begin{theorem}\label{thm:reparamga-to-riemannga}
    Any given ReparamGA $\bm\varphi_*\mathcal N(\bm\mu,\bm\Sigma)$ coincides with the RiemannGA $\bm\exp_{\bm\varphi(\bm\mu)*}\mathcal N\left(\bm 0, \Dev_{\bm\mu}\bm\varphi\bm\Sigma(\Dev_{\bm\mu}\bm\varphi)^\top\right)$ under the Riemannian metric $\bm\varphi^{-1*}g_e$, with $g_e$, the standard Euclidean metric on $\R^d$.
\end{theorem}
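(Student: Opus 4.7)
The plan is to observe that by construction, $\bm\varphi$ is a Riemannian isometry from $(\R^d, g_e)$ to $(\R^d, g)$, where $g := \bm\varphi^{-1*}g_e$, and then to invoke the standard naturality property that isometries intertwine exponential maps. Explicitly, I would record the identity
\begin{equation*}
\bm\varphi \circ \bm\exp^{g_e}_{\bm\mu} = \bm\exp^{g}_{\bm\varphi(\bm\mu)} \circ \Dev_{\bm\mu}\bm\varphi,
\end{equation*}
which holds because an isometry sends geodesics to geodesics, and the initial velocity of a geodesic transforms precisely by the differential of the isometry.

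From there, the argument is a three-line chain of pushforwards. First, the Euclidean exponential at $\bm\mu$ is the translation $\bm v \mapsto \bm\mu + \bm v$ under the canonical identification $T_{\bm\mu}\R^d \cong \R^d$, so $\bm\exp^{g_e}_{\bm\mu*}\mathcal N(\bm 0, \bm\Sigma) = \mathcal N(\bm\mu, \bm\Sigma)$. Second, the pushforward of a centred Gaussian by a linear map $\Dev_{\bm\mu}\bm\varphi$ is $\mathcal N(\bm 0, \Dev_{\bm\mu}\bm\varphi\bm\Sigma(\Dev_{\bm\mu}\bm\varphi)^\top)$. Combining these with the isometry identity yields
\begin{align*}
\bm\varphi_*\mathcal N(\bm\mu, \bm\Sigma) &= (\bm\varphi \circ \bm\exp^{g_e}_{\bm\mu})_*\mathcal N(\bm 0, \bm\Sigma) \\
&= \bm\exp^{g}_{\bm\varphi(\bm\mu)*}\,(\Dev_{\bm\mu}\bm\varphi)_*\mathcal N(\bm 0, \bm\Sigma) \\
&= \bm\exp^{g}_{\bm\varphi(\bm\mu)*}\,\mathcal N\bigl(\bm 0,\; \Dev_{\bm\mu}\bm\varphi\,\bm\Sigma\,(\Dev_{\bm\mu}\bm\varphi)^\top\bigr),
\end{align*}
which is exactly the claimed RiemannGA.

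The only noncomputational point to dispatch is geodesic completeness of $(\R^d, g)$, which is needed for $\bm\exp^g_{\bm\varphi(\bm\mu)}$ to be defined on the whole tangent space; this is immediate, since $(\R^d, g_e)$ is complete and $\bm\varphi$ is an isometry. I do not anticipate any genuine obstacle: the substance reduces to naturality of the exponential map under isometries, and the remainder is bookkeeping around the canonical identifications of tangent spaces of $\R^d$ with $\R^d$.
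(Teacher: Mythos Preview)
Your proof is correct and rests on the same core observation as the paper's---that $\bm\varphi$ is an isometry from $(\R^d, g_e)$ to $(\R^d, g)$---but the two routes diverge slightly from there. The paper invokes flatness of $g$ (via an external reference) to argue that $\bm\exp^g_{\bm\varphi(\bm\mu)}$ is itself a global isometry from the tangent space onto $(\R^d, g)$, so that the composite $\bm\exp^{g}_{\bm\varphi(\bm\mu)}{}^{-1}\circ\bm\varphi$ is an isometry between Euclidean spaces sending $\bm\mu$ to $\bm 0$; it then computes the derivative of this affine map via the chain rule and $\Dev_{\bm 0}\bm\exp = \mathrm{id}$. You instead appeal directly to naturality of the exponential map under isometries, $\bm\varphi\circ\bm\exp^{g_e}_{\bm\mu} = \bm\exp^g_{\bm\varphi(\bm\mu)}\circ\Dev_{\bm\mu}\bm\varphi$, together with the explicit form $\bm\exp^{g_e}_{\bm\mu}(\bm v) = \bm\mu + \bm v$. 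Your route is a bit cleaner: it avoids citing an external result on flat metrics and never needs to invert $\bm\exp^g$, only to note completeness. The paper's route, on the other hand, makes more explicit \emph{why} the map from $(\R^d,g_e)$ into the tangent space is affine (as an isometry of Euclidean spaces), which foregrounds the role of Gaussianity remarked on after the proof.
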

\begin{proof}
    $\bm\varphi$ is an isometry from $(\R^d,g_e)$ to $(\R^d,\bm\varphi^{-1*}g_e)$. Let $\bm\exp_{\bm\varphi(\bm\mu)}\colon T_{\bm\varphi(\bm\mu)}\R^d\to \R^d$ be the exponential map with respect to the metric $\bm\varphi^{-1*}g_e$.
    $g_e$ is a flat metric, hence so is $\bm\varphi^{-1*}g_e$. Thus, we deduce from \cite[Theorem 2.1]{noauthor_riemannian_nodate} that if we let $\tilde g_e$ be the Euclidean metric on $T_p\R^d$ induced by the inner product on this tangent space, $\bm\exp_{\bm\varphi(\bm\mu)}$ is an isometry from $(T_{\bm\varphi(\bm\mu)}\R^d, \tilde g_e)$ to $(\R^d, \bm\varphi^{-1*}g_e)$. Consequently, we have the commutative diagram of isometries
\[\begin{tikzcd}
	{} & {(T_{\bm\varphi(\bm\mu)}\mathbb R^d,\tilde g_e)} \\
	{ (\mathbb R^d, g_e)} & { (\mathbb R^d, \bm \varphi^{-1*}g_e)}
	\arrow["{\bm\exp_{\bm\varphi(\mu)}}", from=1-2, to=2-2]
	\arrow["{\bm\exp_{\bm\varphi(\bm\mu)}\inv\circ\bm\varphi}"{pos=0.5}, from=2-1, to=1-2]
	\arrow["{\bm\varphi}", from=2-1, to=2-2]
\end{tikzcd}\]
    $\bm\exp_{\bm\varphi(\bm\mu)}\inv\circ\bm\varphi$  is thus an isometry from $(\R^d,g_e)$ to $(T_{\bm\varphi(\bm\mu)}\R^d, \tilde g_e)$. This is an isometry between Euclidean spaces which maps $\bm \mu$ to $\bm 0$, so it sends the Gaussian distributions on $\R^d$ centered at $\bm\mu$ to the Gaussian distributions on $T_{\bm\varphi(\bm\mu)}\R^d$ centered at $\bm 0$. Specifically, it sends $\mathcal N(\bm\mu,\bm\Sigma)$ to
    \begin{align*}
        &\mathcal N\left(\bm 0, \Dev_{\bm\mu}\left(\bm\exp_{\bm\varphi(\bm\mu)}\inv\circ\bm\varphi\right)\bm \Sigma \Dev_{\bm\mu}\left(\bm\exp_{\bm\varphi(\bm\mu)}\inv\circ\bm\varphi\right)^\top\right) \\
        &\hspace{4.5em}= \mathcal N\left(\bm 0, \Dev_{\bm\varphi(\bm\mu)}\bm\exp_{\bm\varphi(\bm\mu)}\inv\Dev_{\bm\mu}\bm\varphi\bm\Sigma(\Dev_{\bm\mu}\bm\varphi)^\top\left(\Dev_{\bm\varphi(\bm\mu)}\bm\exp_{\bm\varphi(\bm\mu)}\inv\right)^\top\right) \\
        &\hspace{4.5em}= \mathcal N\left(\bm 0, \Dev_{\bm\mu}\bm\varphi\bm\Sigma(\Dev_{\bm\mu}\bm\varphi)^\top\right)
    \end{align*}
    where the first equality holds by the chain rule and the second by the fact that the derivative of the exponential map at the origin is the identity. Here, $\Dev_{\bm x}f$ denotes the matrix representation of the derivative of $f$ at $\bm x$ with respect to the standard Euclidean coordinates, not the abstract linear map between tangent spaces.
\end{proof}

Gaussianity plays an important role in the proof of \cref{thm:reparamga-to-riemannga}, where it is used that Gaussian distributions are closed under affine transformations and thus isometries of Euclidean spaces.

In the proof of \cref{thm:reparamga-to-riemannga}, we see that when a Riemannian metric $g$ on $\R^d$ is flat, the exponential map $\bm\exp_{\bm\mu}$ is a diffeomorphism from $T_{\bm\mu}\R^d$ to $\R^d$. Thus we get the partial converse to \cref{thm:reparamga-to-riemannga}:
\begin{theorem}\label{thm:converse}
    A RiemannGA $\bm\exp_{\bm\mu*}\mathcal N(\bm 0,\bm\Sigma)$ under a flat Riemannian metric $g$ is a ReparamGA $\bm\varphi_*\mathcal N(\bm0,\bm\Sigma)$ with $\bm\varphi = \bm\exp_{\bm\mu}$.
\end{theorem}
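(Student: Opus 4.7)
The plan is essentially to unpack the definitions once we know that $\bm\exp_{\bm\mu}$ is a diffeomorphism. The statement is a direct consequence of the geometric fact already invoked in the proof of \cref{thm:reparamga-to-riemannga}, so the proof reduces to a short chain of observations.

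First, I would recall from \cite[Theorem 2.1]{noauthor_riemannian_nodate}, as used in \cref{thm:reparamga-to-riemannga}, that for a (complete) flat Riemannian metric $g$ on $\R^d$ the exponential map $\bm\exp_{\bm\mu}\colon T_{\bm\mu}\R^d\to\R^d$ is an isometry between $(T_{\bm\mu}\R^d,\tilde g_e)$ and $(\R^d, g)$, and in particular a smooth diffeomorphism. Completeness of the flat metric on $\R^d$ is the only nontrivial hypothesis needed here; this is the mild assumption on which the argument hinges, and it would be stated explicitly (or absorbed into the definition of a RiemannGA).

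Second, I would use the canonical linear isomorphism $T_{\bm\mu}\R^d\cong \R^d$ to regard $\bm\exp_{\bm\mu}$ as a diffeomorphism $\R^d\to\R^d$, and note that this identification is a linear isometry of Euclidean spaces sending $\mathcal N(\bm 0,\bm\Sigma)$ on $T_{\bm\mu}\R^d$ to $\mathcal N(\bm 0,\bm\Sigma)$ on $\R^d$. Hence $\bm\exp_{\bm\mu *}\mathcal N(\bm 0,\bm\Sigma)$ is literally the pushforward of a Gaussian on $\R^d$ through the diffeomorphism $\bm\varphi := \bm\exp_{\bm\mu}$, which by definition makes it a ReparamGA.

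The main obstacle, such as it is, lies not in the algebra but in justifying that the flat exponential map is a global diffeomorphism rather than merely a local one; this is precisely the content of \cite[Theorem 2.1]{noauthor_riemannian_nodate} under completeness, and once this is cited the rest is immediate. No computation with $\bm\Sigma$ is needed, in contrast to \cref{thm:reparamga-to-riemannga}, because the base point of the Gaussian is already $\bm 0$ in the tangent space and no change of variables is performed.
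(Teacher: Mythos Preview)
Your proposal is correct and matches the paper's approach: the paper dispatches this theorem in a single sentence preceding its statement, observing that flatness of $g$ makes $\bm\exp_{\bm\mu}$ a diffeomorphism from $T_{\bm\mu}\R^d$ to $\R^d$ (via the same result from \cite{noauthor_riemannian_nodate} invoked in the proof of \cref{thm:reparamga-to-riemannga}), so that the RiemannGA is by definition a ReparamGA. Your write-up is more carefully fleshed out---making the identification $T_{\bm\mu}\R^d\cong\R^d$ explicit and flagging the completeness hypothesis---but the underlying argument is identical.
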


\Cref{thm:converse} can only be a partial converse, as when $g$ is not flat, $\bm\exp_{\bm\mu}$ is not a diffeomorphism.

Thus, it may seem at this point that RiemannGA is more expressive than ReparamGA. The following section shows that this is incorrect, as ReparamGA can already express virtually any distribution.

\section{Universality of Reparametrised Gaussian Approximations}
In this section, we explicitly construct a diffeomorphism from the unit Gaussian distribution to any (sufficiently regular) probability distribution. The inverse of this transformation is known in the literature as the Rosenblatt transformation \cite{ditlevsen_structural_1996}. Therefore this construction is not novel, but we include it here for completeness.

We first prove a crucial lemma.
\begin{lemma}\label{lem:mapping-distributions}
    Let $\mathrm p^1$, $\mathrm p^2$ be probability measures on $\R^d$ with non-vanishing smooth probability density functions $f^1$, $f^2$ respectively. Then there exists a diffeomorphism $\bm\varphi\colon\R^d\to\R^d$ such that $\bm\varphi_*\mathrm p^1 = \mathrm p^2$.
\end{lemma}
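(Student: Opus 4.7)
The plan is to build the diffeomorphism coordinate-by-coordinate via a Rosenblatt-type transformation. By composing with inverses, it suffices to prove that for every probability measure $\mathrm p$ on $\R^d$ with smooth non-vanishing density $f$, there exists a diffeomorphism $\bm\psi\colon\R^d\to\R^d$ with $\bm\psi_*\mathrm p = \mathcal N(\bm 0, \bm I_d)$; the desired map in the lemma is then $\bm\varphi = (\bm\psi^{\mathrm p^2})^{-1}\circ\bm\psi^{\mathrm p^1}$, where $\bm\psi^{\mathrm p^1}$, $\bm\psi^{\mathrm p^2}$ are the corresponding maps associated with $\mathrm p^1$, $\mathrm p^2$.

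To construct $\bm\psi$, let $f_{1:i}$ denote the marginal density of the first $i$ coordinates, $f_{i\mid 1:i-1} := f_{1:i}/f_{1:i-1}$ the corresponding conditional densities, and $F_{i\mid 1:i-1}$ the associated conditional CDFs (with the convention $F_{1\mid\emptyset} = F_1$). With $\Phi$ the standard normal CDF, I would set
\[
\psi_i(x_1,\ldots,x_d) := \Phi^{-1}\bigl(F_{i\mid 1:i-1}(x_i\mid x_1,\ldots,x_{i-1})\bigr), \quad i = 1,\ldots, d.
\]
Each $\psi_i$ depends only on $x_1,\ldots,x_i$, so the Jacobian of $\bm\psi$ is lower triangular; its diagonal entries, of the form $f_{i\mid 1:i-1}/(\phi\circ\Phi^{-1}\circ F_{i\mid 1:i-1})$ with $\phi$ the standard normal density, are strictly positive. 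The identity $\bm\psi_*\mathrm p = \mathcal N(\bm 0, \bm I_d)$ then follows from the change-of-variables formula, since the Jacobian determinant cancels the target Gaussian density against the factorisation $f = \prod_{i=1}^d f_{i\mid 1:i-1}$.

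It remains to verify that $\bm\psi$ is indeed a diffeomorphism. Smoothness is inherited from the smoothness of $\Phi^{-1}$ on $(0,1)$ and of the conditional CDFs. For global bijectivity onto $\R^d$, positivity of the conditional density makes $x_i\mapsto F_{i\mid 1:i-1}(x_i\mid x_1,\ldots,x_{i-1})$ a strictly increasing smooth bijection $\R\to(0,1)$ at each stage, whence $x_i\mapsto\psi_i$ is a bijection $\R\to\R$ and the preimage under $\bm\psi$ can be reconstructed coordinate by coordinate. Smoothness of the inverse then follows from the inverse function theorem together with the positivity of the Jacobian determinant. The main technical obstacle is ensuring that the marginal densities $f_{1:i}(x_1,\ldots,x_i) = \int_{\R^{d-i}} f(x_1,\ldots,x_d)\,dx_{i+1}\cdots dx_d$ are themselves smooth and positive: positivity is immediate from that of $f$, but smoothness requires differentiating under the integral sign, which is routine given sufficient decay of $f$ and its derivatives---a mild regularity condition implicit in the smoothness hypothesis that deserves explicit mention.
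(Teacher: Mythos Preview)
Your proof is correct and follows essentially the same Rosenblatt-transformation strategy as the paper: build a triangular map from conditional CDFs to a fixed reference measure, then compose one such map with the inverse of the other. The only cosmetic differences are that the paper uses the uniform measure on $(0,1)^d$ as reference (you use $\mathcal N(\bm 0,\bm I_d)$, i.e.\ you post-compose each coordinate with $\Phi^{-1}$) and verifies the pushforward identity by a direct CDF computation rather than the Jacobian cancellation you sketch; your remark on differentiation under the integral sign is a technical point the paper does not address.
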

\begin{proof}
    We show that there exist diffeomorphisms $\bm\varphi^i\colon \R^d\to (0,1)^d$ such that $\bm\varphi^i_*\mathrm p^i$ are both  the uniform measure on $(0,1)^d$ for $i=1,2$. Then the result follows by taking $\bm\varphi := (\bm\varphi^2)\inv\circ\bm \varphi^1$.
    
    Let $\bm X$ be an $\R^d$ valued random variable with law $\mu^1$. Let $F_{X_1}$ be the c.d.f.~of the first component of $\bm X$. $f^1$ being smooth and non-vanishing implies that $F_{X_1}\colon \R\to (0,1)$ is a diffeomorphism. Define $\varphi^1_1(\bm x) := F_{X_1}(x_1)$. For $i>1$, let $F_{X_i|X_1=x_1,\dots,X_{i-1}=x_{i-1}}$ be the conditional c.d.f.~of the $i$\textsuperscript{th} component of $\bm X$, i.e.,
    \[
        F_{X_i|X_1=x_1,\dots,X_{i-1}=x_{i-1}}(x_i) := \frac{\int_{-\infty}^{x_i}\int_{\R^{d-i}}f^1(x_1,\dots,x_{i-1}, y_i,\dots, y_d)\;dy_d\cdots dy_i}{\int_{\R^{d-i+1}}f^1(x_1,\dots,x_{i-1}, y_i,\dots, y_d)\;dy_d\cdots dy_i},
    \]
    which is a diffeomorphism $\R\to(0,1)$. Define $\varphi^1_i(\bm x) := F_{X_i|X_1=x_1,\dots,X_{i-1}=x_{i-1}}(x_i)$.

    Now for $u_1,\dots,u_d\in(0,1)$,
    \begin{align*}
        P&(\varphi_1^1(\bm X) \leq u_1,\dots,\varphi_d^1(\bm X) \leq u_d) \\
        &= P\left(X_1\leq F\inv_{X_1}(u_1),\dots,X_d\leq F\inv_{X_d|X_1,\dots,X_{d-1}}(u_d)\right) \\
        &= \int_{-\infty}^{F\inv_{X_1}(u_1)}\dots\int_{-\infty}^{F\inv_{X_d|X_1=x_1,\dots,X_{d-1}=x_{d-1}}(u_d)}f^1(x_1,\dots,x_d)\;dx_d\cdots dx_1 \\
        &= \int_{-\infty}^{F\inv_{X_1}(u_1)}\dots\int_{-\infty}^{F\inv_{X_{d-1}|X_1=x_1,\dots,X_{d-2}=x_{d-2}}(u_d)}\int_\R f^1(x_1,\dots,x_d)\;dx_d\\
        &\quad F_{X_d|X_1=x_1,\dots, X_{d-1}=x_{d-1}}\left(F\inv_{X_d|X_1=x_1,\dots, X_{d-1}=x_{d-1}}(u_d)\right)\;dx_{d-1} \cdots dx_1 \\
        &= \int_{-\infty}^{F\inv_{X_1}(u_1)}\dots\int_{-\infty}^{F\inv_{X_{d-1}|X_1=x_1,\dots,X_{d-2}=x_{d-2}}(u_d)}\int_\R f^1(x_1,\dots,x_d)\;dx_d\cdots dx_1 \cdot u_d \\
        &= \dots \\
        &= \int_{-\infty}^{F_{X_1}\inv(u_1)}\int_{\R^{d-1}}f^1(x_1,\dots,x_d)\;dx_d\cdots dx_1 \cdot u_2\cdots u_d \\
        &= F_{X_1}\left(F_{X_1}\inv(u_1)\right)u_2\cdots u_d \\
        &= u_1\cdots u_d
    \end{align*}
    which shows that $\bm\varphi^1_*\mu^1$ is the uniform measure on $(0,1)^d$.
\end{proof}

From \cref{lem:mapping-distributions}, we directly obtain the universality of ReparamGA for probability measures with non-vanishing densities:
\begin{theorem}\label{thm:universality}
    For $\mathrm p$ a probability measure on $\R^d$ with a non-vanishing smooth probability density function, there exists a diffeomorphism $\bm \varphi\colon \R^d \to \R^d$ such that $\mathrm p = \bm\varphi_* \mathcal N(\bm 0, \bm I)$. That is, there is a ReparamGA which is exact. As a consequence, there is a RiemannGA which is exact.
\end{theorem}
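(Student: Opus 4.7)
The plan is to deduce this theorem as an almost immediate corollary of \cref{lem:mapping-distributions}, followed by an invocation of \cref{thm:reparamga-to-riemannga} for the Riemannian consequence. First, I would observe that the standard Gaussian density $(2\pi)^{-d/2}\exp(-\|\bm x\|^2/2)$ on $\R^d$ is smooth and strictly positive everywhere, so it satisfies the hypotheses of \cref{lem:mapping-distributions}. By assumption, $\mathrm p$ also has a non-vanishing smooth density. Thus the lemma applied with $\mathrm p^1 = \mathcal N(\bm 0,\bm I)$ and $\mathrm p^2 = \mathrm p$ yields a diffeomorphism $\bm\varphi\colon\R^d\to\R^d$ with $\bm\varphi_*\mathcal N(\bm 0,\bm I) = \mathrm p$, which is precisely the statement that there exists an exact ReparamGA of $\mathrm p$.

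For the second assertion, I would simply note that the exact ReparamGA $\bm\varphi_*\mathcal N(\bm 0,\bm I)$ just produced is, by \cref{thm:reparamga-to-riemannga} applied with $\bm\mu = \bm 0$ and $\bm\Sigma = \bm I$, equal to the RiemannGA $\bm\exp_{\bm\varphi(\bm 0)*}\mathcal N(\bm 0,\Dev_{\bm 0}\bm\varphi(\Dev_{\bm 0}\bm\varphi)^\top)$ under the flat Riemannian metric $\bm\varphi^{-1*}g_e$. Since this RiemannGA coincides with $\mathrm p$, it is exact.

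There is no real obstacle here: the heavy lifting was done in \cref{lem:mapping-distributions} (the Rosenblatt construction) and in \cref{thm:reparamga-to-riemannga} (the ReparamGA-to-RiemannGA correspondence). The only minor point to verify explicitly is that the standard Gaussian density meets the ``non-vanishing smooth'' regularity condition required by the lemma, which is trivial. Accordingly, I would keep the proof to just a few lines, making the two citations explicit and remarking that the diffeomorphism $\bm\varphi$ obtained is, up to composition with the c.d.f.\ transform of a Gaussian, an inverse Rosenblatt transformation.
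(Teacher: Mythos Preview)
Your proposal is correct and follows exactly the paper's own proof: apply \cref{lem:mapping-distributions} with $\mathrm p^1=\mathcal N(\bm 0,\bm I)$ and $\mathrm p^2=\mathrm p$, then invoke \cref{thm:reparamga-to-riemannga} for the Riemannian consequence. The only difference is that you spell out a few routine details (smoothness and positivity of the Gaussian density, the explicit RiemannGA parameters) that the paper leaves implicit.
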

\begin{proof}
    Take $\mathrm p_1 = \mathcal N(\bm 0,\bm I)$, $\mathrm p_2 = \mathrm p$ in \cref{lem:mapping-distributions}. The existence of an exact RiemannGA then follows from \cref{thm:reparamga-to-riemannga}.
\end{proof}

Note that, from the proof of \cref{lem:mapping-distributions}, if we only require the probability density function to be $C^k$, then we still get \cref{lem:mapping-distributions} and hence \cref{thm:universality} with a $C^{k+1}$ diffeomorphism $\bm\varphi$.

The transformation $\bm\varphi$ of \cref{thm:universality} is not unique, as pre-composing $\bm\varphi$ with any orthogonal transformation will still yield a mapping from $\mathcal N(\bm 0,\bm I)$ to $\mathrm p$.

Finally, note that, unlike in \cref{thm:reparamga-to-riemannga}, \cref{thm:universality} does not make use of Gaussianity, and Gaussians could have been replaced with other approximating probability distributions.

\section{Families of Reparametrised Gaussian Approximations}\label{sec:families}
Now suppose that instead of approximating one target probability measure $\mathrm p$ and choosing a $\bm\varphi$ to perform a good (or exact) ReparamGA, we are tasked with choosing a fixed $\bm\varphi$ that creates uniformly good ReparamGAs over some family of probability measures $\{\mathrm p_\alpha\}_{\alpha}$. This is useful if we know the distribution of interest belongs to such a family \cite{mackay_choice_1998,mucsanyi_rethinking_2025}.

One may wonder if a similarly strong result as \cref{thm:universality} can be obtained in this more general case. That is, can we find a diffeomorphism $\bm\varphi\colon\R^d\to\R^d$ such that $\mathrm p_{\alpha} = \bm\varphi_*\mathcal N(\bm \mu_{\alpha}, \bm \Sigma_{\alpha})$ for all $\alpha$ with some $\bm\mu_{\alpha}$, $\bm\Sigma_{\alpha}$?

This is generally not possible. Indeed, if we assume $\{\mathrm p_\alpha\}_{\alpha}$ is a statistical manifold $\mathcal M$, and write $\mathcal G$ for the statistical manifold of Gaussian distributions over $\R^d$, then, by Chentsov's theorem \cite[Section 5.1.4]{ay_information_2017}, any diffeomorphism $\bm\varphi\colon \R^d\to \R^d$ preserves the Fisher information metric on $\mathcal G$. That is, $\bm\varphi$ is an isometry from $(\mathcal G,g_F)$ to $(\bm\varphi_*\mathcal G, g_F)$ where $g_F$ denotes the respective Fisher information metrics. Thus, $\mathcal G$ can be mapped surjectively onto $\mathcal M$ only if $(\mathcal G,g_F)$ is isometric to a submanifold of $(\mathcal M,g_F)$. This is clearly restrictive.

When this is not possible, one may ask how to choose a $\bm\varphi$ such that $\bm\varphi^* \mathcal G$ is `close' to $\mathcal M$, such that probability measures in $\mathcal M$ may be well approximated by Gaussians. Defining a good notion of `closeness', and a corresponding good $\bm\varphi$, is an interesting direction of future research.

One option proposed in \cite[Appendix D1]{mucsanyi_rethinking_2025} is to choose probability measures $\mathrm p_1 \in \mathcal G$, $\mathrm p_2 \in \mathcal M$ and to use \cref{thm:universality} to construct a $\bm\varphi$ such that $\bm\varphi_*\mathrm p_1 = \mathrm p_2$. One would hope that such a $\bm\varphi$ would make $\bm\varphi^* \mathcal G$ and $\mathcal M$ `close'. More generally, given a (second-order) probability distribution $Q$ on $\mathcal M$ and some statistical divergence $\mathrm D$, one could attempt to find a diffeomorphism $\bm\varphi$ that minimises the expected divergence $\E_{\mathrm p_2\sim Q}[\inf_{\mathrm p_1\in \mathcal G}\mathrm D(\mathrm p_2,\bm\varphi_*\mathrm p_1)]$. The suggestion of $\bm\varphi$ that makes $\bm\varphi_*\mathcal{G}$ and $\mathcal{M}$ intersect at a select point $\bm\varphi_*\mathrm p_1 = \mathrm p_2$  comes from choosing $Q$ to be a Dirac measure at some $\mathrm p_2\in \mathcal M$.

\subsection{Visualisation of Statistical Manifolds}
In \cref{fig}, $\bm\varphi_*\mathcal G$ is the statistical manifold of one-dimensional Gaussian pushforwards by some diffeomorphism $\bm \varphi$, and $\mathcal M$ is some other statistical manifold. Since $\mathcal G$ and $\mathcal M$ have non-isometric Fisher metrics, they cannot be mapped surjectively to one another. Instead, $\bm\varphi$ is chosen such that $\bm\varphi_*\mathcal G$ and $\mathcal M$ intersect at a point. Intuitively, one would expect this to make $\bm\varphi_*\mathcal G$ and $\mathcal M$ `close' together.

Such a sketch is limited, as while the statistical 2-dimensional manifolds live in a common infinite-dimensional space of probability distributions over some space, there is generally no 3-dimensional slice containing both of them, preventing such a visualisation. Moreover, even one such manifold may be impossible to visualise isometrically in 3 dimensions: here, to visualise the hyperbolic geometry of $\mathcal G$ \cite[Example 3.1]{ay_information_2017}, we use the hyperboloid model, which is not an isometric embedding of $\mathcal G$ in Euclidean space but in Minkowski space.
\begin{figure}
    \centering
    \includegraphics[width=0.5\linewidth,trim={0cm 0.9cm 0 0.4cm}]
    {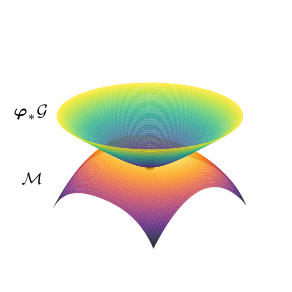}
    \caption{Sketch of statistical manifolds. $\mathcal M$ is some statistical manifold, and $\bm\varphi_*\mathcal G$ is the statistical manifold of one-dimensional Gaussian pushforwards by a diffeomorphism $\bm \varphi$ making the manifolds intersect at some point.}
    \label{fig}
\end{figure}
\section{Conclusion}
    We reviewed two different types of approximations from the literature under a common umbrella of geometric Gaussian approximations, clarifying their exact relationship to one another. We then presented the construction of the Rosenblatt transformation, which proves that such geometric Gaussian approximations are universal. Future work could develop a theory for addressing the questions raised in \cref{sec:families} about the approximate mapping of statistical manifolds to one another through diffeomorphisms of the base measure space. These appear to touch on some deep questions in information geometry.
    
    A third type of geometric Gaussian approximation, proposed in \cite{roy_reparameterization_2024} and based on Riemannian diffusions, remains to be included in this framework.
\begin{credits}
\subsubsection{\ackname}
\end{credits}
%
%
%
\bibliographystyle{splncs04}
\bibliography{bibliography}

\begin{thebibliography}{10}
\providecommand{\url}[1]{\texttt{#1}}
\providecommand{\urlprefix}{URL }
\providecommand{\doi}[1]{https://doi.org/#1}

\bibitem{ay_information_2017}
Ay, N., Jost, J., Lê, H.V., Schwachhöfer, L.: Information {Geometry},
  Ergebnisse der {Mathematik} und ihrer {Grenzgebiete} 34, vol.~64. Springer
  International Publishing, Cham (2017). \doi{10.1007/978-3-319-56478-4},
  \url{http://link.springer.com/10.1007/978-3-319-56478-4}

\bibitem{bergamin_riemannian_nodate}
Bergamin, F., Moreno-Muñoz, P., Hauberg, S., Arvanitidis, G.: Riemannian
  {Laplace} approximations for {Bayesian} neural networks. In: Proceedings of
  the 37th {Conference} on {Neural} {Information} {Processing} {Systems}.
  Neural {Information} {Processing} {Systems} {Foundation} (2023),
  \url{https://proceedings.neurips.cc/paper_files/paper/2023/file/631f99d8e860054410c239fc90d18270-Paper-Conference.pdf}

\bibitem{bui_likelihood_2024}
Bui, T.D.: Likelihood approximations via {Gaussian} approximate inference
  (2024). \doi{10.48550/arXiv.2410.20754}

\bibitem{noauthor_riemannian_nodate}
do~Carmo, M.: Riemannian {Geometry}. Birkhäuser Boston, MA (2013),
  \url{https://link.springer.com/book/9780817634902}

\bibitem{ditlevsen_structural_1996}
Ditlevsen, O., Madsen, H.O.: Structural {Reliability} {Methods}. Wiley (1996)

\bibitem{hobbhahn_laplace_2022}
Hobbhahn, M., Hennig, P.: Laplace {Matching} for fast {Approximate} {Inference}
  in {Latent} {Gaussian} {Models} (2022). \doi{10.48550/arXiv.2105.03109}

\bibitem{kobyzev_normalizing_2021}
Kobyzev, I., Prince, S.J., Brubaker, M.A.: Normalizing {Flows}: {An}
  {Introduction} and {Review} of {Current} {Methods}. IEEE Transactions on
  Pattern Analysis and Machine Intelligence  \textbf{43}(11),  3964--3979
  (2021). \doi{10.1109/TPAMI.2020.2992934},
  \url{https://ieeexplore.ieee.org/document/9089305}, conference Name: IEEE
  Transactions on Pattern Analysis and Machine Intelligence

\bibitem{mackay_choice_1998}
MacKay, D.J.: Choice of {Basis} for {Laplace} {Approximation}. Machine Learning
   \textbf{33}(1),  77--86 (1998). \doi{10.1023/A:1007558615313}

\bibitem{mucsanyi_rethinking_2025}
Mucsányi, B., Da~Costa, N., Hennig, P.: Rethinking {Approximate} {Gaussian}
  {Inference} in {Classification} (2025). \doi{10.48550/arXiv.2502.03366}

\bibitem{roy_reparameterization_2024}
Roy, H., Miani, M., Ek, C.H., Hennig, P., Pförtner, M., Tatzel, L., Hauberg,
  S.: Reparameterization invariance in approximate {Bayesian} inference
  (2024),
  \url{https://proceedings.neurips.cc/paper_files/paper/2024/file/0f934dd2030f5740cde0aa2697a105a9-Paper-Conference.pdf}

\bibitem{yang_diffusion_2023}
Yang, L., Zhang, Z., Song, Y., Hong, S., Xu, R., Zhao, Y., Zhang, W., Cui, B.,
  Yang, M.H.: Diffusion {Models}: {A} {Comprehensive} {Survey} of {Methods} and
  {Applications}. ACM Comput. Surv.  \textbf{56}(4),  105:1--105:39 (2023).
  \doi{10.1145/3626235}

\bibitem{yu_riemannian_2024}
Yu, H., Hartmann, M., Sanchez, B.W.M., Girolami, M., Klami, A.: Riemannian
  {Laplace} {Approximation} with the {Fisher} {Metric}. In: Proceedings of
  {The} 27th {International} {Conference} on {Artificial} {Intelligence} and
  {Statistics}. pp. 820--828. PMLR (2024),
  \url{https://proceedings.mlr.press/v238/yu24a.html}, iSSN: 2640-3498

\end{thebibliography}

\end{document}